\documentclass[10pt]{article}
\usepackage{amsmath,amssymb,amsthm,pb-diagram,lamsarrow,pb-lams, hyperref, euscript, ulem, mathcomp}
\usepackage[matrix,arrow,curve]{xy}
\usepackage{graphicx}
\usepackage{tabularx}
\usepackage{float}
\usepackage{hyperref}

\usepackage[T1]{fontenc}

\usepackage[sc]{mathpazo}
\linespread{1.05}         % Palatino needs more leading (space between lines)

\usepackage[usenames]{color}
\usepackage{colortbl}

\DeclareFontFamily{T1}{pzc}{}
\DeclareFontShape{T1}{pzc}{m}{it}{1.8 <-> pzcmi8t}{}
\DeclareMathAlphabet{\mathpzc}{T1}{pzc}{m}{it}
% the command for it is \mathpzc

\textwidth=140mm

\title{Universal covering space of the noncommutative torus}

\theoremstyle{plain}
\newtheorem{prop}{Proposition}[section]

\newtheorem{lem}[prop]{Lemma}%[section]
%[section]
\newtheorem{thm}[prop]{Theorem}%[section]

\theoremstyle{definition}
\newtheorem{defn}[prop]{Definition}%[section]
\newtheorem{empt}[prop]{}%[section]
%[section]
%[section]

\theoremstyle{remark}

%\input{amssymb.sty}
%    Some definitions useful in producing this sort of documentation:
\chardef\bslash=`\\ % p. 424, TeXbook
%    Normalized (nonbold, nonitalic) tt font, to avoid font
%    substitution warning messages if tt is used inside section
%    headings and other places where odd font combinations might
%    result.

%    command name

%    LaTeX package name

%    File name

%    environment name

\hfuzz1pc % Don't bother to report overfull boxes if overage is < 1pc

%       Theorem environments

%\newcommand{\E}{\mathcal{E}}

%\newcommand{\G}{\mathcal{G}}

%Gothic characters

%Greek characters

%Math symbols

\newcommand{\rar}{\rightarrow}

\newbox\ncintdbox \newbox\ncinttbox %% noncommutative integral symbols
\setbox0=\hbox{$-$} \setbox2=\hbox{$\displaystyle\int$}
\setbox\ncintdbox=\hbox{\rlap{\hbox
    to \wd2{\hskip-.125em \box2\relax\hfil}}\box0\kern.1em}
\setbox0=\hbox{$\vcenter{\hrule width 4pt}$}
\setbox2=\hbox{$\textstyle\int$} \setbox\ncinttbox=\hbox{\rlap{\hbox
    to \wd2{\hskip-.175em \box2\relax\hfil}}\box0\kern.1em}

  %% NC integral

 %% principal value integral
%\newcounter {subsubsection}[subsection]
%\newcounter {paragraph}[subsubsection]
\begin{document}
\maketitle  \setlength{\parindent}{0pt}
\begin{center}
\author{}
{\textbf{Petr R. Ivankov*}\\
e-mail: * monster.ivankov@gmail.com \\
}
\end{center}

\vspace{1 in}

\begin{abstract}
\noindent

Gelfand - Na\u{i}mark theorem supplies contravariant functor from a category of commutative $C^*-$  algebras to a category of locally compact Hausdorff spaces. Therefore any commutative $C^*-$ algebra is an alternative representation of a topological space. Similarly a category of  (noncommutative) $C^*-$ algebras can be regarded as a category of generalized (noncommutative) locally compact Hausdorff spaces. Generalizations of topological invariants may be defined by algebraic methods. For example Serre Swan theorem states that complex topological $K$ - theory coincides with $K$ - theory of $C^*$ - algebras. However the algebraic topology have a rich set of invariants. Some invariants do not have noncommutative generalizations yet. This article contains a sample of noncommutative universal covering. General theory of noncommutative universal coverings is being developed by the author of this article. However this sample has independent interest, it is very easy to understand and does not require knowledge of Hopf-Galois extensions.

\end{abstract}
\tableofcontents

\section{Introduction}

Following Gelfand-Na\u{i}mark theorem  \cite{murphy} states that category of locally compact Hausdorff topological spaces is equivalent to a category of commutative $C^*-$ algebras.

\begin{thm}\label{gelfand-naimark}
Let $\mathbf{Haus}$  be a category of locally compact Hausdorff spaces with continuous proper maps as morphisms. And, let $\mathbf{C^*Comm}$  be the category of commutative $C$ -algebras with proper *-homomorphisms (send approximate units into approximate units) as morphisms. There is a contravariant functor $C:\mathbf{Haus}\rar\mathbf{C^*Comm}$  which sends each locally compact Hausdorff space $X$ to the commutative $C^*$ -algebra $C_0(X)$ ($C(X)$ if $X$ is compact). Conversely, there is a contravariant functor $  \Omega: \mathbf{C^*Comm}\rar \mathbf{Haus}$  which sends each commutative $C^*$ -algebra $A$ to the space of characters on $A$ (with the Gelfand topology).

The functors $C$ and $\Omega$ are an equivalence of categories.
\end{thm}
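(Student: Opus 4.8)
The plan is to construct the two functors explicitly, verify that they are contravariant functors, and then produce natural isomorphisms $\Om\op C\cong\mathrm{id}_{\mathbf{Haus}}$ and $C\op\Om\cong\mathrm{id}_{\mathbf{C^*Comm}}$. On objects $C$ and $\Om$ are as stated. On morphisms, a proper continuous $f\colon X\rar Y$ is sent to the pullback $C(f)\colon C_0(Y)\rar C_0(X)$, $g\mapsto g\op f$; properness of $f$ is exactly what is needed for $g\op f$ to vanish at infinity and for $C(f)$ to carry approximate units to approximate units, so $C(f)$ lies in $\mathbf{C^*Comm}$. Dually, a proper $*$-homomorphism $\phy\colon A\rar B$ is sent to $\Om(\phy)\colon\Om(B)\rar\Om(A)$, $\tau\mapsto\tau\op\phy$; here properness of $\phy$ is what prevents $\tau\op\phy$ from being the zero functional, so that $\Om(\phy)$ genuinely maps characters to characters, and a short check shows $\Om(\phy)$ is continuous and proper for the Gelfand topologies. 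Functoriality and the reversal of arrows are then immediate from $(g\op f)^* = f^*\op g^*$ and the analogous identity for characters.

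The analytic core is the commutative Gelfand--Na\u{i}mark theorem: for every commutative $C^*$-algebra $A$ the Gelfand transform $\Gm_A\colon A\rar C_0(\Om(A))$, $\Gm_A(a)(\tau)=\tau(a)$, is an isometric $*$-isomorphism. First I would reduce to the unital case: adjoining a unit gives $\Om(A^+)\cong\Om(A)^+$, the one-point compactification, so the non-unital statement follows from the unital one. For unital commutative $A$ one checks, in order: (i) $\Om(A)$ is a nonempty compact Hausdorff space, being a weak-$*$ closed subset of the dual unit ball; (ii) $\Gm_A$ is a $*$-homomorphism, the only nontrivial point being $\Gm_A(a^*)=\overline{\Gm_A(a)}$, which holds because a self-adjoint element has real spectrum and hence real values under every character; (iii) $\Gm_A$ is isometric, since for self-adjoint $a$ one has $\|\Gm_A(a)\|_\infty = r(a) = \lim_n\|a^n\|^{1/n} = \|a\|$ by the spectral radius formula and the $C^*$-identity, and then $\|\Gm_A(a)\|_\infty^2 = \|\Gm_A(a^*a)\|_\infty = \|a^*a\| = \|a\|^2$ for general $a$; (iv) the image of $\Gm_A$ is a closed (by (iii)) $*$-subalgebra of $C(\Om(A))$ that contains the constants and separates points, so by the Stone--Weierstrass theorem it is all of $C(\Om(A))$. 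This gives the component-wise inverse of $C\op\Om$, and naturality in $A$ is a direct computation with the defining formulas.

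For the other composite, given a locally compact Hausdorff $X$ I would show the evaluation map $\eps_X\colon X\rar\Om(C_0(X))$, $x\mapsto\big(f\mapsto f(x)\big)$, is a homeomorphism. Urysohn's lemma shows $C_0(X)$ separates the points of $X$ and separates points from closed sets, so $\eps_X$ is injective and a topological embedding for the Gelfand topology (which is the weak-$*$ topology). Surjectivity is the essential point: every nonzero character $\tau$ of $C_0(X)$ is an evaluation. Passing to the unitization reduces this to the compact case, where one argues that the kernel of a character of $C(Y)$, $Y$ compact, is a maximal ideal with nonempty hull --- otherwise finitely many elements $f_1,\dots,f_n$ of the kernel have no common zero, so the finite sum $\sum_i f_i^*f_i$, again in the kernel, is strictly positive on $Y$ and therefore invertible in $C(Y)$, contradicting that a proper ideal contains no invertible element --- and any point of that hull recovers the character by maximality. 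Properness of $\eps_X$ follows from the fact that $\Om$ takes proper $*$-homomorphisms to proper maps, and naturality of $\eps$ is again a formula check.

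The main obstacle is steps (iii)--(iv) above: showing the Gelfand transform is isometric and surjective is exactly where the $C^*$-axiom, the spectral radius formula, and the Stone--Weierstrass theorem are indispensable, whereas the morphism-level bookkeeping --- properness, approximate units, continuity and naturality --- is routine but has to be handled with care in the non-unital setting, since that is where the hypothesis of proper maps and proper $*$-homomorphisms is genuinely used.
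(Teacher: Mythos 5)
Your proposal is correct: it is the standard proof of the commutative Gelfand--Na\u{i}mark equivalence (Gelfand transform isometric and surjective via the spectral radius formula, the $C^*$-identity and Stone--Weierstrass; evaluation map a homeomorphism via Urysohn and the maximal-ideal argument; properness and approximate units handling the non-unital morphism bookkeeping). The paper itself offers no proof of this statement --- it quotes the theorem as known background, citing Murphy --- so there is no alternative argument to compare against; your write-up is essentially the textbook proof that the citation points to, and the steps you flag as the analytic core are indeed the only non-routine ones.
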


So any (noncommutative) $C^*-$ algebra may be regarded as generalized (noncommutative)  locally compact Hausdorff topological space.
We may summarize several properties of the Gelfand Na\u{i}mark cofunctor with the
following dictionary.
\newline
\break
\begin{tabular}{|c|c|}
\hline
TOPOLOGY & ALGEBRA\\
\hline
Locally compact space & $C^*$ - algebra\\
Compact space & Unital $C^*$ - algebra\\
Continuous map & *-homomorpfism\\
Minimal compactification & Unitization\\
Maximal compactification & Algebra if multpicators\\
Closed subset & Ideal\\

Disjoint union of topological spaces ($\coprod X_{\iota} $) & Direct sum of (pro) - $C^*$ algebras ($\oplus A_{\iota}$) \\
Principal fibration & Hopf-Galois extension. \\
Universal covering & ? \\

\hline
\end{tabular}
\newline
\newline
\break

This article assumes elementary knowledge of following subjects.
\begin{enumerate}
\item Algebraic topology  \cite{spanier:at}.
\item $C^*-$ algebras and operator theory \cite{blackadar:ko},  \cite{murphy},  

\end{enumerate}

We use following notation.
\newline
\begin{tabular}{|c|c|}
\hline
Symbol & Meaning\\
\hline
$\mathbb{N}$ & monoid of natural numbers \\
$\mathbb{Z}$ & ring of integers \\
$\mathbb{R}$ (resp. $\mathbb{C}$)  & Field of real (resp. complex) numbers \\
$\mathbb{Q}$  & Field of rational numbers \\
$H$ &Hilbert space \\
$\mathcal{B}(H)$ & Algebra of bounded operators on Hilbert space $H$\\
$\mathcal{K}(H)$ or $\mathcal{K}$ & Algebra of compact operators on Hilbert space $H$\\
$U(H) \subset \mathcal{B}(H) $ & Group of unitary operators on Hilbert space $H$\\
$U(A) \in A $ & Group of unitary operators of algebra $A$\\
$A^+$  & $C^*-$ algebra $A$ with adjointed identity\\
$M(A)$  & A multiplier algebra of $C^*$-algebra $A$\\
$C(X)$ & $C^*$ - algebra of continuous complex valued \\
 & functions on topological space $X$\\
$C_0(X)$ & $C^*$ - algebra of continuous complex valued \\
 & functions on topological space which tends to 0 at infinity\\
  $C_c(X)$ & Algebra of continuous functions with compact support  \\
  $\mathrm{sp}(a)$ & Spectrum of element of $C^*$-algebra $a\in A$  \\

\hline
\end{tabular}
\newline
\newline

If $\tilde{X} \rightarrow X$ is an universal covering then there is a natural $*$-homomorphism $f:C_0(X) \rightarrow M(C_0(\tilde{X}))$. Homomorphism $f$ can be defined by a faithful representations $\pi: C_0(X) \rightarrow B(H)$, $\pi^*: C_0(\tilde{X}) \rightarrow B(H)$ such that

\begin{equation}\label{repr_equ}
\pi^*(f(x)\tilde{x})= \pi(x)\pi^*(\tilde{x}), \ \pi^*(\tilde{x} f(x))=\pi^*(\tilde{x}) \pi(x), \ x\in C_0(X), \ \tilde{x} \in C_0(\tilde{X}).
\end{equation} 

We would like construct an analogue of \ref{repr_equ} for a noncommutative torus.

\section{Algebraic construction of known universal covering spaces}
 
\subsection{Algebraic construction of the $\mathbb{R}\rightarrow S^1$ covering}\label{r_arrow_s1}

Our construction contains two ingredients:
\begin{enumerate}
\item Algebraic analogue of $n$ - listed covering projection $f_n: S^1\rightarrow S^1$ ;
\item Algebraic analogue of $\mathbb{R}\rightarrow S^1$;
\end{enumerate}
\begin{empt} {\it Construction of $n$ - listed covering projection.}
It is well known that $C(S^1)$ is a $C^*$-algebra which is generated by a single unitary element $u\in U(C(S^1))$. The $C(S^1)$ algebra can be faithfully represented, i.e. there is an inclusion $C(S^1)\rightarrow B(H)$. Let $\mathrm{sp}(u) \in \mathbb{C}$ be the spectrum of the $u$ (it is known that $\mathrm{sp}(u)=\{z \in \mathbb{C}\ | \ |z|=1 \}$), $\phi \in \ B_{\infty}(\mathrm{sp}(u))$ is a Borel-measurable function such that 
\begin{equation}\label{root_n_eqn}
(\phi(z))^n = z \ (\forall z \in \mathrm{sp}(u)). 
\end{equation}
According to spectral theorem \cite{murphy} there exist $v = \phi(u) \in U(B(H))$ and $v^n = u$. Let $C(u) \rightarrow B(H)$ (resp. $C(v)\rightarrow B(H))$ be a $C^*$- algebra generated by $u$ (resp. $v$), then we have an inclusion $C(u) \subset C(v)$ which  corresponds to an $n$ - listed covering projection $f_n: S^1\rightarrow S^1$.
\end{empt}
\begin{empt}{\it Construction of $\mathbb{R}\rightarrow S^1$}.
A circle $S^1$ can be parameterized by an angle parameter $\theta \in [-\pi, \pi]$.
Any function $g \in C([-1, 1])$ such that $g(-1)=g(1)$ corresponds to $\varphi_g \in C(S^1)$ such that $\phi_g(\theta)=g(\theta/\pi)$.  Let us fix a sequence of operators $u_0 = u, u_1, u_2, ... \in B(H)$ such that $u_{n+1}^2 = u_n$, we have a sequence of inclusions $C(u)=C(u_0)\subset C(u_1) \subset C(u_2) \subset ... \rightarrow B(H)$. We would like to prove that this sequence and any representation $\pi: C(u) \rightarrow B(H)$ naturally defines a representation $\pi^*: C_0(\mathbb{R}) \rightarrow B(H)$. Let $C_c(\mathbb{R}) \subset C_0(\mathbb{R})$ be an algebra of functions with compact support. There is a natural inclusion $i: C_c(\mathbb{R}) \rightarrow B(H)$ defined by following way. If $f \in C_c(\mathbb{R})$ then there is $n \in \mathbb{N}$ such that support of $f$ is contained in $[-2^n, 2^n]$. If $f^*\in C([-1,1])$ is such that $f(x)=f^*(2^nx)$ then $f^*(-1)=f^*(1)=0$, and we can define $\phi_{f^*} \in C(S^1)$. If $\pi_n : C(S^1) \approx C(u_n) \rightarrow B(H)$ then we set $i(f)=\pi_n(\phi_{f^*})$. It is clear that this definition does not depend on $n$. Since   $C_c(\mathbb{R})$ is dense in $C_0(\mathbb{R})$,  an inclusion $i:C_c(\mathbb{R}) \rightarrow B(H)$ can be continuously extended to a representation $\pi^*:C_0(\mathbb{R}) \rightarrow B(H)$. Representations $\pi$ and $\pi^*$ satisfy (\ref{repr_equ}).
\end{empt}

\subsection{Algebraic construction of the $\mathbb{R}^2\rightarrow S^1 \times S^1$ covering}\label{comm_torus_uni_space}

\begin{empt} {\it Construction of $C^*$ - algebra}.
The $C(S^1 \times S^1)$ is generated by two unitary elements $u, v \in U(C(S^1 \times S^1))$. There is a faithful representation $C(S^1 \times S^1)\rightarrow B(H)$. This representation induces two faithful representations $\pi_u: C(u) \rightarrow \ B(H)$, \ $\pi_v: C(v) \rightarrow \ B(H)$.  Construction from section \ref{r_arrow_s1} supplies following two representations:
\begin{enumerate}
\item $\pi^*_u: C_0(\mathbb{R}) \rightarrow B(H)$,
\item $\pi^*_v: C_0(\mathbb{R}) \rightarrow B(H)$.
\end{enumerate}
It is naturally to suppose that $C_0(\mathbb{R}^2)$ is isomorphic to the norm completion of subalgebra of $B(H)$ generated by operators of following type: 
\begin{equation}\label{torus}
\pi^*_u(f_1)\pi^*_v(f_2), \ \pi^*_v(f_1)\pi^*_u(f_2); \ (f_1, f_2 \in C_0(\mathbb{R})).
\end{equation}
However it is not always true. This construction is not unique because there are different Borel-measurable functions which satisfy (\ref{root_n_eqn}). This algebra is not always commutative, because one can select element $u_1\in B(H)$ such that $u = u_1^2$ and $u_1v = - vu_1$. However any algebra constructed by (\ref{torus}) is representative of an unique Morita equivalence class.
\end{empt}
\begin{empt} {\it Morita equivalence.}
Although constructed above algebra is not always isomorphic to $C_0(\mathbb{R}^2)$ it is strongly Morita equivalent to it \cite{blackadar:ko}. As it is proven in \cite{brown_green_rieffel:morita_stable} a $\sigma$-unital $C^*$-algebra $A$ is strongly Morita equivalent to a $\sigma$-unital $C^*$-algebra  $B$ if there is a $*$-isomorphism $A\otimes \mathcal{K} \approx B\otimes \mathcal{K}$. I find that good noncommutative theory of universal coverings should be invariant with respect to Morita equivalence. This theory can replace $C^*$-algebras with their stabilizations (recall that the stabilization of a $C^*$ algebra $A$ is a $C^*$-algebra $A\otimes \mathcal{K}$).
\begin{defn}
Let $A$ be a $C^*$-algebra, $A\rightarrow B(H)$ is a faithful representation, $u \in U(A^+)$, $v \in U(B(H))$,  is such that $v^n=u$ and $v^i \notin U(A^+)$, ($i=1,..., n-1$). A {\it generated by $v$ algebra} is a minimal subalgebra of $B(H)$ which contains following operators: 
\begin{enumerate}
\item $v^i a; \ (a \in A, \ i=0, ..., n-1)$
\item $a v^i$.
\end{enumerate}
Denote by $A\{v\}$ a generated by $v$ algebra.
\end{defn}
\begin{lem} Let $A$ be a $C^*$-algebra, $A\rightarrow B(H)$ is a faithful representation,  $u\in U(A^+)$ is an unitary element such that $\mathrm{sp}(u)=\{z \in \mathbb{C} \ | \ |z| = 1 \}$, $\xi, \eta \in B_{\infty}(\mathrm{sp}(u))$ are Borel measured functions such that $\xi(z)^n = \eta(z)^n = z$ ($\forall z \in\mathrm{sp}(u)$). Then there is an isomorphism 
\begin{equation}
A\{\xi(u)\} \otimes \mathcal{K} \rightarrow A\{\eta(u)\} \otimes \mathcal{K} 
\end{equation}
which is also a left $A$-module isomorphism. The isomorphism is given by
\begin{equation}
\xi(u) \otimes x \mapsto \eta(u) \otimes \xi\eta^{-1}(u)x; \ (x \in \mathcal{K}).
\end{equation}
\end{lem}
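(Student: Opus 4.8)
The plan is to construct the isomorphism explicitly, using the displayed formula as the prescription on generators, and to produce its inverse by the symmetric prescription. Write $v_\xi=\xi(u)$, $v_\eta=\eta(u)$ and $w=(\xi\eta^{-1})(u)$. The essential first step is to record the properties of $w$. Because $|\xi(z)|=|\eta(z)|=1$ and $\xi(z)^{n}=\eta(z)^{n}=z$ for every $z\in\mathrm{sp}(u)$, the Borel function $\xi\eta^{-1}$ takes all of its values in the group $\mu_{n}$ of $n$-th roots of unity; hence $w$ is a unitary of $B(H)$ with $w^{n}=1$. Being a Borel function of $u$, it commutes with $u$ and with every operator that commutes with $u$, in particular with $v_\xi$ and $v_\eta$, so $v_\xi=w\,v_\eta=v_\eta\,w$ and $w^{*}=w^{\,n-1}=(\eta\xi^{-1})(u)$. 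Thus the two chosen roots of $u$ differ only by the order-$n$ twist $w$, which is central relative to $u$. I would also note that $v_\xi$ and $v_\eta$ need not be unitarily equivalent inside $B(H)$ (their spectra are a priori different Borel subsets of the circle lying over $\mathrm{sp}(u)=S^{1}$), and that reconciling this difference is exactly the role of the stabilization by $\K$.

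For the construction itself, recall that $A\{v\}$ is by definition the $C^{*}$-subalgebra of $B(H)$ generated by $A$ and $v$, and that (since $v$ is unitary with $v^{n}=u\in A^{+}$) the linear span of the products $t\otimes x$, with $t$ a finite word in $v,v^{*}$ and elements of $A$, is a dense $*$-subalgebra of $A\{v\}\otimes\K$. I would take the displayed rule $v_\xi\otimes x\mapsto v_\eta\otimes w\,x$ as the value of the desired map $\Phi\colon A\{v_\xi\}\otimes\K\to A\{v_\eta\}\otimes\K$ on the generator $v_\xi\otimes x$ and, together with the requirements that $\Phi$ be a $*$-homomorphism and a left $A$-module map, extend it to the free $*$-algebra on these generators; the symmetric rule, with $\xi$ and $\eta$ (hence $w$ and $w^{*}$) interchanged, gives a map $\Psi$ in the other direction. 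On the generators $\Psi\circ\Phi$ and $\Phi\circ\Psi$ act as the identity because $w^{*}w=ww^{*}=1$. Hence, once $\Phi$ and $\Psi$ are shown to descend to the $C^{*}$-completions as honest $*$-homomorphisms, they are mutually inverse $*$-isomorphisms, and $\Phi$ is a left $A$-module isomorphism directly from its construction, since it never disturbs the elements of $A$ occurring in a word.

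The heart of the matter — and the step I expect to be the real obstacle — is precisely the well-definedness just referred to: one must check that every relation holding among $A$ and $v_\xi$ in $B(H)$ is sent to a valid relation in $A\{v_\eta\}\otimes\K$. The identities collected in the first step handle the obvious relations: $v_\xi^{n}=u$ is matched on the nose, since $(v_\eta\otimes w)^{n}=v_\eta^{\,n}\otimes w^{n}=u\otimes 1$ by $w^{n}=1$; unitarity of $v_\xi$ is matched because $v_\eta\otimes w$ is a unitary; relations internal to $A$ go to themselves; and a factor $w^{\pm1}$ may be moved freely past a power of $v_\eta$ or past an element of $C^{*}(u)$, since $w$ commutes with $u$. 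What remains, and what must be pinned down carefully, is an exact description of the relations of $A\{v\}$ — in particular whether $v_\xi$ can impose on $A$ any identity beyond $v_\xi^{n}=u$, equivalently how conjugation by $v_\xi$ acts on $A$ — and the verification that the prescription is consistent on those; for this one would exploit the canonical $\Zb/n$-action on $A\{v\}$ that fixes $A$ pointwise and multiplies $v$ by a primitive $n$-th root of unity. Settling this structural point is where the proof has to be anchored.
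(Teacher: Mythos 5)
Your starting point is exactly the paper's: the entire published proof is the one\mbox{-}line identity $\xi(u)=\xi\eta^{-1}(\eta(u))$, i.e.\ $\xi(u)=w\,\eta(u)$ with $w=(\xi\eta^{-1})(u)$ unitary, $w^{n}=1$, commuting with $u$ and with both roots. Up to there you agree with the paper (and you are more careful about recording the properties of $w$). But the proposal then stops precisely where the content of the lemma lies, and by your own admission: you never establish that the generator prescription extends to a $*$-homomorphism, deferring ``this structural point'' to future work. That step cannot be waved through, because the unstabilized algebras $A\{\xi(u)\}$ and $A\{\eta(u)\}$ are in general \emph{not} isomorphic --- the paper says so explicitly just below the lemma, contrasting the sequences $A\{x_i\}$ and $A\{y_i\}$ before and after tensoring with $\K$. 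Your argument makes no essential use of $\otimes\,\K$ (every manipulation you perform would go through verbatim without the second tensor factor), so if it closed it would prove the false unstabilized statement. That is the structural signal that the missing step is not routine.

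There is also a concrete inconsistency in the multiplicativity check you do offer. Reading the displayed rule as ``multiply the $\K$\mbox{-}leg on the left by $w$,'' so that $\Phi(\xi(u)\otimes x)=\eta(u)\otimes wx$, a homomorphism would have to send $\xi(u)^{2}\otimes xy=(\xi(u)\otimes x)(\xi(u)\otimes y)$ to $\eta(u)^{2}\otimes w\,x\,w\,y$, whereas the same pattern applied directly to $\xi(u)^{2}\otimes z$ (using $\xi(u)^{2}=w^{2}\eta(u)^{2}$) gives $\eta(u)^{2}\otimes w^{2}z$; these disagree unless $w$ commutes with arbitrary $x\in\K$, which it does not. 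Your computation $(\eta(u)\otimes w)^{n}=\eta(u)^{n}\otimes w^{n}$ silently replaces the \emph{map} by the \emph{element} $\eta(u)\otimes w$ and hides this clash; likewise $\Psi\circ\Phi=\mathrm{id}$ ``because $w^{*}w=1$'' presupposes the homomorphism property you have not obtained. To repair the argument the stabilization must enter in earnest: for instance, realize the order\mbox{-}$n$ Borel twist $w$ as a unitary conjugation after tensoring with $\K$, or route the claim through the Brown--Green--Rieffel criterion that the paper itself cites, rather than extending a non\mbox{-}multiplicative assignment from generators.
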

\begin{proof}
Follows from the equality $\xi(u)= \xi\eta^{-1}(\eta(u))$.
\end{proof}
Let $u\in U(C(S^1 \times S^1))$ be an unitary such that $u \neq v^n$ for any $n > 1, v \in U(C(S^1 \times S^1))$. One can construct different sequences $x_0 = u, x_1, x_2, ... \in B(H)$, $y_0 = u, y_1, y_2, ... \in B(H)$, such that $x_{n+1} = x_n^2$, $y_{n+1} = y_n^2$ but $C^*$ - algebras
\begin{equation}\nonumber
A\{x_1\}\subset A\{x_2\}\subset ...
\end{equation}
are not isomorphic to $C^*$ - algebras
\begin{equation}\nonumber
A\{y_1\}\subset A\{y_2\}\subset ... \ .
\end{equation}
However following sequences 
\begin{equation}\nonumber
A\{x_1\} \otimes \mathcal{K} \subset A\{x_2\} \otimes \mathcal{K}\subset ...
\end{equation}
\begin{equation}\nonumber
A\{y_1\} \otimes \mathcal{K} \subset A\{y_2\} \otimes \mathcal{K}\subset ... \ 
\end{equation}
contain isomorphic algebras.
If $A$ is the norm completion of an algebra generated by (\ref{torus}) then $A \otimes \mathcal{K} \approx C_0(\mathrm{R}^2) \otimes \mathcal{K} $, i.e. $A$ is strongly Morita equivalent to $C_0(\mathrm{R}^2)$. 

\end{empt}

\section{Universal covering of a noncommutative torus}

A noncommmutative torus \cite{varilly:noncom} $A_{\theta}$ is a $C^*$-algebra generated by two unitary elements ($u, v \in U(A_{\theta})$) such that
\begin{equation}\nonumber
uv = e^{2\pi i\theta}vu, \ (\theta \in \mathbb{R}).
\end{equation}
If $\theta\in \mathbb{Q}$ then noncommutative torus is strongly Morita equivalent to commutative one, i.e. $A_{\theta}\otimes \mathcal{K} \approx C(S^1 \times S^1) \otimes \mathcal{K}$,  and our construction is the same as in the section \ref{comm_torus_uni_space}. A case $\theta \notin \mathbb{Q}$ is more interesting. However a construction universal covering fully coincides with the considered in section \ref{comm_torus_uni_space} one. Let $A_{\theta}\rightarrow B(H)$ be a faithful representation. This representation induces two representations $\pi_u: C(u) \rightarrow B(H)$, $\pi_v: C(v) \rightarrow B(H)$. These representations induce representations $\pi^*_u(C_0(\mathbb{R})) \rightarrow B(H)$, $\pi^*_v(C_0(\mathbb{R})) \rightarrow B(H)$. The universal algebra of noncommutative torus is a norm completion of an algebra generated by  operators of following type
\begin{equation}\nonumber
\pi^*_u(f_1)\pi^*_v(f_2), \ \pi^*_v(f_1)\pi^*_u(f_2); \ (f_1, f_2 \in C_0(\mathbb{R})).
\end{equation}

This algebra is not unique but it is a representative of the unique strong Morita equivalence class.

\section{Discussion}

It is known the Maxwell's equations of classical electrodynamic are more important than Maxwell's proof. Now I am occupied by general theory of noncommutative universal coverings, which uses theory of Hopf $C^*$-algebras and Hopf-Galois extensions \cite{hajac:toknotes}. However I obtained a new algebra which can be regarded as a locally compact spectral triple \cite{varilly:noncom}. Maybe this result is more interesting than a general theory. This algebra is also interesting because it contains almost commutative sector, i.e. noncommutativity parameter $\theta$ is infinitesimal. It means that there is a sequence of algebras
\begin{equation}\nonumber
A_{\theta} \rightarrow A_{\theta / 2} \rightarrow ...
\end{equation}
such that noncommutativity parameter tends to 0. Maybe this algebra has a physical sense. I found an analogy of this algebra with \cite{kaluza_klein_gravity} Kaluza-Klein theory. In Kaluza-Klein theory we do not observe compact dimensions of the Universe because they are compact. We observe flat quotient space, which corresponds to a subalgebra of the Universe. Maybe we observe almost commutative subalgebra of the Universe, because we cannot observe a noncommutative algebra.

\section{Acknowledgment}
I would like to acknowledge a "Non-commutative geometry and topology" seminar, organized by:
\begin{enumerate}
\item Prof. Alexander Mishchenko,
\item Prof. Ivan Babenko,
\item Prof. Evgenij Troitsky,
\item Prof. Vladimir Manuilov,
\item Dr. Anvar Irmatov
\end{enumerate}
for discussion of my work.


\begin{thebibliography}{10}



\bibitem{blackadar:ko}
B. Blackadar. {\it K-theory for Operator Algebras}, Second edition.
Cambridge University Press 1998.


\bibitem{murphy}
G.J. Murhpy. {\it $C^*$-Algebras and Operator Theory.}
Academic Press 1990.


\bibitem{spanier:at}
E.H. Spanier. {\it Algebraic Topology.} McGraw-Hill. New York 1966.


\bibitem{brown_green_rieffel:morita_stable}
Lawrence G. Brown, Philip Green, and Marc A. Rieffel.  {\it Stable isomorphism and strong Morita equivalence of $C^∗$  -algebras.} Source: Pacific J. Math. Volume 71, Number 2 , 349-363, 1977




\bibitem{varilly:noncom}
J.C. V\'arilly. {\it An Introduction to Noncommutative Geometry}. EMS 2006.


\bibitem{hajac:toknotes}
{\it Lecture notes on noncommutative geometry and quantum groups}, Edited by Piotr M. Hajac

\bibitem{kaluza_klein_gravity} J. M. Overduin, P. S. Wesson, {\it Kaluza-Klein Gravity}, arXiv:gr-qc/9805018, 1998.




\end{thebibliography}
\end{document}